
\documentclass[12pt]{amsart}
\usepackage{geometry}
\usepackage{amsmath,amssymb}
\usepackage{color}
\usepackage{stmaryrd}
\setcounter{MaxMatrixCols}{10}

\geometry{lmargin=1.4in,rmargin=1.4in,bmargin=1.5in, tmargin=1.5in}
\newtheorem{thm}{Theorem}[section]
\newtheorem{cor}[thm]{Corollary}
\newtheorem{lem}[thm]{Lemma}

\newtheorem{prop}[thm]{Proposition}

\newtheorem{exa}[thm]{Example}

\theoremstyle{definition}

\newtheorem{rem}[thm]{Remark}

\numberwithin{equation}{section}

\begin{document}
\title[$2$-nil-sum property]{When is  every non central-unit a sum of two nilpotents?}
\author[Breaz and Zhou]{Simion Breaz and Yiqiang Zhou}

\begin{abstract} A ring is said to satisfy the $2$-nil-sum property if every  non central-unit is the sum of two nilpotents.  We prove that a ring satisfies the $2$-nil-sum property iff it is either a simple ring with the $2$-nil-sum property or a commutative local ring with nil Jacobson radical, and we provide an example of a simple rings with the $2$-nil-sum property that is not commutative. Moreover, a simple right Goldie ring has the $2$-nil-sum property iff it is a field.
 \end{abstract}

\address{Department of Mathematics, Faculty of Mathematics and Computer Science, Babes-Bolyai University, Ro-400084 Cluj-Napoca, Romania}
\email{bodo@math.ubbcluj.ro}

\address{Department of Mathematics and Statistics, Memorial University of Newfoundland, St.John's, NL A1C 5S7, Canada}
\email{zhou@mun.ca}
\thanks{}
\subjclass[2020]{16U99; 16S50}
\keywords{Nilpotent, unit,  $2$-nil-sum property,  matrix ring, simple ring}

\maketitle

\baselineskip=20pt

\bigskip

\section{Introduction}

Throughout, rings are  associative with identity. 
We start by recalling three special types of elements in a ring.  An element in a ring is $2$-good if it is a sum of two units (see \cite{V}), is fine if it is a sum of a unit and a nilpotent (see \cite{CL}), and is $2$-nilgood 
if it is a sum of two nilpotents (see \cite{BC}). By the terminology of V\'{a}mos \cite{V}, a ring is $2$-good if each element is $2$-good.  The study of $2$-good rings  (also called rings with the $2$-sum property in the literature),
initiated by Wolfson \cite{W53} and Zelinsky \cite{Z54},  has attracted considerable interest (see, for example, \cite{BFT}, \cite{KS}, 
\cite{M06}, \cite{S10}, \cite{V} and the references there). A ring is called a fine ring if each nonzero element is fine (the zero element being fine implies that the ring is trivial). Fine rings were introduced and extensively investigated by C\u{a}lug\u{a}reanu and Lam \cite{CL}.

Motivated by the notions of $2$-good rings and fine rings,  we define a ring to satisfy the \textit{$2$-nil-sum property} if every element that is not a central unit is $2$-nilgood (a central unit in a ring being $2$-nilgood implies that the ring is  trivial).  These rings belong to a larger class of rings for which every element is either a unit or a sum of two units. 
But, we will see that the $2$-nil-sum property is very restrictive.  For instance, a ring without nonzero nilpotents satisfies the $2$-nil-sum property iff it is a field.   The rings which is additively generated by nilpotents  have been studied by several authors with connections to rings additively generated by commutators. We refer to \cite{Che} and \cite{Har} for more details. As far as we are  aware, the $2$-nil-sum property of rings has not been discussed in the literature.  Here a study of this topic is conducted. The main results proved here are the following: a ring satisfies the $2$-nil-sum property iff it is either a simple ring with the $2$-nil-sum property or a commutative local ring with nil Jacobson radical (Theorem \ref{reduction}); there exists simple rings with the $2$-nil-sum property that are not commutative (Example \ref{example}); a simple right Goldie ring has the $2$-nil-sum property iff it is a field (Theorem \ref{Goldie}).

For a ring $R$, we denote by $C(R)$, $J(R)$, $U(R)$, and ${\rm nil}(R)$ the center, the Jacobson radical, the group of units of $R$, and the set of nilpotents of $R$, respectively.  We write $\mathbb{M}_{n}(R)$ for the ring of  $n\times n$ matrices over $R$ whose identity is denoted by $I_n$.  
By a \textit{non central-unit} we mean an element that is not a central unit. For $i,j\in \{1,2,\ldots,n\}$, we denote by $E_{ij}$ the $n\times n$ matrix whose $(i,j)$-entry is $1$ and all other entries are $0$.

\section{Basic properties and the reduction theorem}

We start by recording some basic properties.

\begin{lem} \label{lem2.2}Let $R$ be a ring with the $2$-nil-sum property. The following hold:
	\begin{enumerate}
	\item $C(R)\backslash U(R)\subseteq {\rm nil}(R)$. Consequently,  $C(R)$ is a local ring with nil Jacobson radical.
\item For every proper ideal $I$ of $R$ we have $I\subseteq C(R)\cap {\rm nil}(R)$.
			\item All proper ideals of $R$ are contained in $J(R)$. 
		\item $J(R)$ is nil and $J(R)\subseteq C(R)$.
		\item For any ideal $I$ of $R$, central units of $R/I$ can be lifted to central units of $R$. 
	\end{enumerate}
\end{lem}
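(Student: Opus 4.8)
The plan is to base everything on two elementary observations about the $2$-nil-sum property. First, since any non-unit is in particular not a central unit, \emph{every non-unit of $R$ is a sum of two nilpotents}. Second, \emph{in any nonzero ring $1$ is not a sum of two nilpotents}: if $1=a+b$ with $a,b$ nilpotent, then $b=1-a$ is a unit (as $a$ is nilpotent) while $b$ is nilpotent, forcing $1=0$. These two facts, together with the observation that the homomorphic image of a nilpotent is nilpotent, will drive the whole lemma.

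For (1), I would take $c\in C(R)\setminus U(R)$; it is a non-central-unit, so $c=a+b$ with $a,b$ nilpotent. Because $c$ is central it commutes with $a$, so the subring $S$ generated by $c$ and $a$ is commutative and contains $b=c-a$. In the commutative ring $S$ the nilpotents form an ideal, so $c=a+b$ is nilpotent; hence $C(R)\setminus U(R)\subseteq{\rm nil}(R)$. Since $U(R)\cap C(R)=U(C(R))$ (the inverse of a central unit is central), the non-units of $C(R)$ coincide with ${\rm nil}(C(R))$, which is an ideal; thus $C(R)$ is local with nil Jacobson radical.

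The heart of the argument, and the step I expect to look hardest a priori, is the centrality assertion hidden in (2): \emph{a proper ideal is central}. The trick is to test the property not on $x$ but on $1+x$. Fix a proper ideal $I$ and $x\in I$. If $1+x$ were not a central unit, it would be a sum of two nilpotents $1+x=a+b$; reducing modulo $I$ and using $\bar x=0$ gives $1=\bar a+\bar b$ in the nonzero ring $R/I$, contradicting the second observation above. Hence $1+x$ is a central unit and $x=(1+x)-1\in C(R)$, so $I\subseteq C(R)$. As $x$ is also a non-unit (it lies in a proper ideal), part (1) gives $x\in{\rm nil}(R)$, proving (2). Parts (3) and (4) then follow formally: a proper ideal is now nil, hence contained in $J(R)$, which settles (3); and $J(R)$ itself is a proper ideal, so (2) applied to it yields $J(R)\subseteq C(R)\cap{\rm nil}(R)$, which is (4).

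Finally, for (5) I would prove the stronger statement that \emph{every} lift of a central unit is already a central unit. Let $\bar u\in R/I$ be a central unit and $u\in R$ any preimage (the case $I=R$ being vacuous). If $u$ were not a central unit then $u=a+b$ with $a,b$ nilpotent, so $\bar u=\bar a+\bar b$; but then $\bar b=\bar u-\bar a=\bar u\,(1-\bar u^{-1}\bar a)$ is a unit (since $\bar u^{-1}$ is central, $\bar u^{-1}\bar a$ is nilpotent and $1-\bar u^{-1}\bar a$ is a unit), while $\bar b$ is nilpotent --- impossible in the nonzero ring $R/I$. Hence $u$ is a central unit. The only genuine obstacle is recognizing the $1+x$ device in (2); once that is in hand, the apparent difficulty of proving centrality of ideals evaporates and the remaining parts are bookkeeping.
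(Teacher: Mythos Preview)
Your proof is correct and follows essentially the same route as the paper: part (1) via commutation of $b$ with $c-a$, part (2) via the $1+x$ device and reduction modulo $I$, parts (3)--(4) as formal consequences, and part (5) by observing that a central unit in a nonzero ring cannot be a sum of two nilpotents. The only difference is cosmetic: the paper argues (2) by contradiction starting from a hypothetical non-central element of $I$, while you argue it directly for every $x\in I$; and in (5) you spell out explicitly why $\bar u=\bar a+\bar b$ is impossible, whereas the paper simply asserts the contradiction.
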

\begin{proof}
(1) Assume that $r$ is a central element that is not a unit. Then $r=b_1+b_2$, where $b_1, b_2\in {\rm nil}(R)$. Since $r$ is central, $b_1$ and $b_2$  commute.  So $r=b_1+b_2\in {\rm nil}(R)$.

(2) Assume that $I\backslash  C(R)\neq \varnothing$.  Let $a\in  I\backslash C(R)$. Then $1+a$ is a sum of two nilpotents, so $\bar 1\in R/I$ is a sum of two nilpotents,  a contradiction.  Hence $I\subseteq C(R)\backslash U(R)$, so $I\subseteq {\rm nil}(R)$ by (1).

(3) and (4)  are clear by (2).

	



	 $(5)$  Let $\bar u:=u+I$ be a central unit of $R/I$. We claim that $u\in R$ is a central unit. Otherwise,  $u$ is a sum of two nilpotents in $R$. Hence $\bar u$ is a sum of two nilpotents in $R/I$, a contradiction.
%
	\end{proof}
	
\begin{thm}\label{reduction} {\rm (}Reduction Theorem{\rm )} A ring $R$ satisfies the $2$-nil-sum property iff $R$ is either a simple ring with the $2$-nil-sum property or a commutative local ring with nil Jacobson radical. 
\end{thm}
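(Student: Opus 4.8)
The plan is to treat the two implications separately; the backward direction is routine. If $R$ is simple with the $2$-nil-sum property there is nothing to check, and if $R$ is a commutative local ring with nil $J(R)$ then, since every unit is central, the non central-units are exactly the elements of $J(R)$, each of which is nilpotent and hence a sum of two nilpotents (add $0$). So I would put all the effort into the forward direction.

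For the forward direction I assume $R$ has the property; if $R$ is simple I land in the first alternative, so I may assume $R$ is not simple and aim to show it is commutative local with nil radical. First I would record two facts. (a) Every quotient $R/I$ inherits the property: by Lemma~\ref{lem2.2}(5) central units of $R/I$ lift to central units of $R$, so a non central-unit of $R/I$ is the image of a non central-unit of $R$, hence of a sum of two nilpotents. Applying this with $I=J(R)$ and then Lemma~\ref{lem2.2}(3) inside the quotient shows that $R/J(R)$ has no nonzero proper ideals, so it is simple. (b) Since $R$ is not simple it has a nonzero proper ideal, which by Lemma~\ref{lem2.2}(3) lies in $J(R)$; thus $J(R)\neq 0$.

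The heart of the argument is to show that the commutator ideal $T=\langle ab-ba\rangle$ is proper. I would exploit $J(R)\subseteq C(R)$ (Lemma~\ref{lem2.2}(4)): for $r\in R$ and $c\in J(R)$ the product $rc$ again lies in $J(R)\subseteq C(R)$ and is therefore central, and expanding $(rc)s=s(rc)$ while using that $c$ is central yields $[r,s]\,c=0$ for all $s$. Hence $[R,R]\,J(R)=0$, and this upgrades to $T\,J(R)=J(R)\,T=0$. Were $T=R$, we would get $J(R)=J(R)R=J(R)T=0$, contradicting (b); so $T$ is proper, whence $T\subseteq J(R)$ by Lemma~\ref{lem2.2}(3). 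Consequently every commutator vanishes modulo $J(R)$, so the simple ring $R/J(R)$ is commutative, i.e. a field, and $R$ is local. Because $J(R)$ is nil and $R$ is local, ${\rm nil}(R)=J(R)\subseteq C(R)$, so every nilpotent of $R$ is central; then each element of $R$, being either a central unit or a sum of two (now central) nilpotents, is central, and $R=C(R)$ is commutative, as required.

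I expect the single genuine obstacle to be the identity $[R,R]\,J(R)=0$ together with its consequence $T\neq R$: once one notices that centrality of the radical is exactly what prevents the commutator ideal from being all of $R$, the passage to $R/J(R)$ being a field and then to full commutativity of $R$ is formal. Everything else is bookkeeping with Lemma~\ref{lem2.2}.
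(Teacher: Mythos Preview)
Your proof is correct and follows essentially the same route as the paper: the pivotal step---using $J(R)\subseteq C(R)$ to show $[R,R]\,J(R)=0$ and hence that commutators lie in $J(R)$---is exactly the paper's argument, only packaged via the commutator ideal $T$ instead of the annihilator $\mathrm{Ann}(x)$ of a fixed $0\neq x\in J(R)$. The remaining cosmetic differences (you deduce $R/J(R)$ is a field from ``simple $+$ commutative'' rather than ``commutative $+$ Lemma~\ref{lem2.2}(1)'', and you finish via ``all nilpotents are central'' rather than lifting central units) are immaterial.
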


\begin{proof}
The sufficiency is clear. For the necessity, suppose that $R$ satisfies the $2$-nil-sum property but $R$ is not simple. 	Then, by Lemma \ref{lem2.2}(3), $J(R)\not= 0$. Let $0\not=x\in J(R)$. If $a,b\in R$ then, noting $J(R)\subseteq C(R)$, we have  $$(ab)x=a(bx)=(bx)a=b(xa)=b(ax)=(ba)x,$$ hence $(ab-ba)x=0$. It follows that $ab-ba$ belongs to the ideal
$Ann(x)=\{r\in R \mid rx=0\}$. Since $Ann(x)\not= R$, $Ann(x)\subseteq J(R)$. 
It follows that $R/J(R)$ is a commutative ring with the $2$-nil-sum property, and hence is local by Lemma 2.1(1).  It follows that $R/J(R)$ is a field. Thus, by Lemma \ref{lem2.2}(5),  for any $a\in R\backslash J(R)$, $a=u+j$ where $u$ is a central unit and $j\in J(R)$. It follows that $a$ is central. Hence, $R$ is commutative, and so $R$ is a commutative local ring with $J(R)$ nil.  
\end{proof}

\bigskip

If $R$ is a commutative ring with the $2$-nil-sum property then every non central-unit $a\in R$ is nilpotent, so it is a sum of the form $a=b+c$ with $b^1=0$ and $c=a$ is nilpotent.  Thus, $R$ is a ring with the $2$-nil-sum property of type $(1, \infty)$ as defined below.  The $2$-nil-sum property is refined as follows: let $p,q$ be integers with $p\ge 1$ and $q\ge 1$. We say that a ring $R$ has the $2$-nil-sum property of type $(p,q)$ if, for each non central-unit $a$ in $R$, $a=b+c$ where $b^p=c^q=0$. The ring $R$ has the $2$-nil-sum property of type $(p,\infty)$ if, for each non central-unit $a$ in $R$, $a=b+c$ where $b^p=0$ and $c$ is nilpotent. Clearly,  a ring $R$ is a field iff $R$ has the $2$-nil-sum property of type $(1,1)$.

\begin{prop} \label{lem5.2} Suppose that $R$ has the  $2$-nil-sum property of type $(2,\infty)$. Then $R$ is commutative.
\end{prop}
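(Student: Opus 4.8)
The plan is to use the Reduction Theorem to split into two cases. By Theorem~\ref{reduction}, a ring $R$ with the $2$-nil-sum property is either commutative local with nil Jacobson radical, or simple. In the first case there is nothing to prove, so the whole content of the proposition lies in ruling out noncommutative simple rings that happen to have the $2$-nil-sum property of the strengthened type $(2,\infty)$. Thus I would reduce immediately to the situation where $R$ is simple (so $C(R)$ is a field by Lemma~\ref{lem2.2}(1), since a simple ring has $J(R)=0$ and hence the local center has nil radical $=0$) and every non central-unit is a sum of a square-zero element and a nilpotent.

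The key is to exploit square-zero elements. If $b^2=0$ then $b$ is a very rigid object: for any $r\in R$, the element $brb$ satisfies $(brb)^2 = br(b^2)rb = 0$ as well, and more importantly square-zero elements interact nicely with idempotents and with the left/right annihilator structure. My first concrete step would be to show that $R$ contains nontrivial square-zero elements unless $R$ is a division ring: take any non central-unit $a$ (which exists unless $R$ is a division ring, since in a division ring every nonzero element is a central unit only if $R$ is a field — and I would handle the division ring case separately), write $a=b+c$ with $b^2=0$, and analyze. The real leverage comes from picking elements cleverly. For a noncommutative simple ring there exist $x,y$ with $xy\neq yx$; I would like to manufacture from the type $(2,\infty)$ decompositions enough square-zero elements to force a contradiction with noncommutativity.

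The cleanest route I would pursue: show that in a ring with type $(2,\infty)$, every nilpotent element is already square-zero, or at least that the set of square-zero elements is large enough to generate $R$ additively. Concretely, I expect that for any square-zero $b$ and any $r$, elements like $b + rb r^{-1}$ (when $r$ is a unit) or sums/conjugates of square-zero elements are again non central-units and can be re-decomposed, and iterating yields strong commutation relations. A standard and powerful tool here is: if $a$ is a sum of two square-zero elements $b,c$, then $a$ satisfies a polynomial identity forced by $b^2=c^2=0$; in particular $a^3 = (b+c)^3$ expands using $b^2=c^2=0$ into a sum of mixed terms, and controlling these should pin down the multiplicative structure. I would try to prove that the existence of a noncommuting pair forces a unit that is a sum of two square-zero elements (hence itself "small"), contradicting that units behave rigidly.

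The hard part will be the simple noncommutative case: showing no such $R$ exists under the type $(2,\infty)$ hypothesis. The subtlety is that Example~\ref{example} shows a genuine noncommutative simple ring does satisfy the (unrestricted) $2$-nil-sum property, so the argument must genuinely use the square-zero constraint $b^2=0$ and cannot follow from structure theory alone. I anticipate the crux is an algebraic identity: from $a=b+c$ with $b^2=0$ and $c$ nilpotent, together with the ability to re-decompose conjugates and linear combinations of $a$, I must extract that any two elements commute. I would look for a slick identity of the form ``every element $a$ satisfies $a^2 \in C(R)$" or ``$[a^2,r]=0$ for all $r$", since an algebra in which every square is central is commutative (by linearizing $a^2$ central to get $ab+ba$ central and combining); establishing such a centrality-of-squares statement from the type $(2,\infty)$ decomposition is, I expect, where the main work and the main obstacle lie.
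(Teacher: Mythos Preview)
Your reduction via Theorem~\ref{reduction} to the simple case is exactly right, and your instinct that the square-zero constraint must do real work (since Example~\ref{example} blocks any purely structural argument) is on target. But the speculative routes you sketch---showing every nilpotent is square-zero, or that every square is central---are not how the argument goes, and I do not see how either would be established from the hypothesis. There is a genuine gap: you are missing two concrete ideas that the paper uses.

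First, the paper begins by observing that $R$ has \emph{no nontrivial idempotents}. If $e=e^2\neq 1$ then $e$ is a non central-unit, so $e=b+c$ with $b^2=0$ and $c$ nilpotent; a result of Ferrero--Puczy{\l}owski--Sidki then forces $e=0$. This is the structural fact that eventually produces the contradiction.

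Second, the decisive trick is to apply the type $(2,\infty)$ decomposition not to a generic element but to $1-x$, where $x$ is a noncentral square-zero element (such an $x$ exists once $R$ is simple and noncommutative). Writing $1-x=y+z$ with $y^2=0$ and $z^n=0$ gives $(1-x-y)^n=0$; expanding and using $x^2=y^2=0$ yields
\[
1=n_1(x+y)+n_2(xy+yx)+n_3(xyx+yxy)+\cdots
\]
with integer coefficients. Left-multiplying by $x$ and then right-multiplying by $y$ collapses this to $xy=a(xy)^2$ for some $a$ commuting with $xy$, so $a(xy)$ is an idempotent. Since $x^2=0$ forces $a(xy)\neq 1$, the first step gives $a(xy)=0$, hence $xy=0$, hence $x=0$, a contradiction. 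Your proposal never isolates the element $1-x$, never produces an idempotent, and never invokes the absence of nontrivial idempotents; without these, the plan does not close.
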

\begin{proof} We observe that $R$ has no non-trivial idempotents:  if $1\not= e^2=e\in R$ is a sum of  a square-zero element and a nilpotent it follows   by \cite[Proposition 2]{FPS} that  $e=0$.

Suppose that $R$ is not commutative. Then $R$ is a simple ring by Theorem 2.2, hence $0$ is the only central nilpotent.  But ${\rm nil}(R)\not= 0$, hence there exists a square-zero element $x$ that is not central. Since $R$ is of type $(2,\infty)$, there exist $y,z\in R$ such that 
	$$1-x=y+z,$$ 
	where $y^2=0$ and $z^n=0$ for some $n\ge 1$. Then $(1-x-y)^n=0$. It follows that
	\begin{equation*}
	1=n_1(x+y)+n_2(xy+yx)+n_3(xyx+yxy)+n_4(xyxy+yxyx)+
	\cdots,
	\end{equation*}
	where $n_1=n$ and each $n_i$ is an integer. Multiplying both sides of this equation from the left by $x$ gives
	\begin{equation*}
	\begin{split}
	x&=n_1xy+n_2xyx+n_3xyxy+n_4xyxyx+\cdots\\
	\end{split}
	\end{equation*}	
	Multiplying both sides of this equality from the right by $y$ gives
	\begin{equation*}
	xy=n_2xyxy+n_4xyxyxy+\cdots =a(xy)^2=(xy)^2a,
	\end{equation*}
	where $a=n_2+n_4xy+\cdots$. It follows that $a(xy)$ is an idempotent.  Since $x^2=0$, we have $a(xy)\not= 1$. We obtain  $a(xy)=0$, hence $xy=0$. It follows that $x=0$, a contradiction.  
\end{proof}

\begin{prop} Suppose that $R$ is of characteristic 0 and has the  $2$-nil-sum property of type $(p,q)$ where $p\le 3$ and $q\le 5$. Then $R$ is commutative.
\end{prop}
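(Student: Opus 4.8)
The plan is to mimic the structure of the proof of Proposition~\ref{lem5.2}, upgrading its square-zero bookkeeping to accommodate nilpotents of higher index and exploiting the characteristic-$0$ hypothesis to invert the integer coefficients that will appear. First I would reduce to the simple noncommutative case: assume $R$ is not commutative, so by Theorem~\ref{reduction} $R$ is a simple ring with the $2$-nil-sum property. The center $C(R)$ of a simple ring is a field, and since $\mathrm{char}\,R=0$ it contains $\mathbb{Q}$; hence every nonzero integer is invertible in $C(R)$, which is exactly what will let me solve the relations I obtain. Because $R$ is simple and noncommutative it is not a field, so $\mathrm{nil}(R)\neq 0$, and as $0$ is the only central nilpotent I can pick a nonzero, non-central element $x$ with $x^{2}=0$ (take the top nonzero power of a nilpotent of minimal index).

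Next, since $1-x$ is a non-central unit, the type $(p,q)$ hypothesis — which for any admissible pair implies type $(3,5)$, the weakest case — gives
\[
1-x=y+z,\qquad y^{3}=0,\quad z^{5}=0 .
\]
Writing $w=x+y$ and using $z^{5}=(1-w)^{5}=0$ produces the integer identity
\[
1=5w-10w^{2}+10w^{3}-5w^{4}+w^{5},
\]
whose right-hand side I would expand into words in $x$ and $y$, the relations $x^{2}=0$ and $y^{3}=0$ killing every word containing $xx$ or $yyy$ and keeping the total length at most $5$. The aim is then to multiply this identity on the left and right by suitable monomials in $x$ and $y$ — for instance left multiplication by $x$ annihilates words beginning with $x$, while right multiplication by $y^{2}$ annihilates words ending in $y$ — so as to isolate a single surviving word carrying a nonzero integer coefficient, and then to invert that integer in $C(R)\supseteq\mathbb{Q}$ to force a relation such as $xy=0$ or $yx=0$, from which $x=0$ would follow, contradicting the choice of $x$.

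I expect the combinatorial bookkeeping to be the main obstacle. In Proposition~\ref{lem5.2} both pieces were square-zero, so the relevant words alternated strictly and collapsed cleanly into powers of $xy$; here $y^{3}=0$ rather than $y^{2}=0$ permits blocks $y^{2}$ inside the words, which greatly enlarges the list of surviving monomials and obstructs a direct collapse into a single idempotent. The bounds $p\le 3$ and $q\le 5$ are presumably exactly what keep this list finite and small enough for the isolation to close up; I anticipate organizing the computation by the degree of the words, and perhaps by their leading and trailing letters, and treating the extreme case $(3,5)$ as decisive while the smaller pairs follow \emph{a fortiori}. The characteristic-$0$ hypothesis enters precisely at the final division step, and it is where the argument genuinely departs from that of Proposition~\ref{lem5.2}.
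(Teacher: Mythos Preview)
Your reduction is exactly the paper's: assume $R$ is noncommutative, pass to the simple case via Theorem~\ref{reduction}, produce a non-central square-zero $x$, and write $1-x=y+z$ with $y^{3}=z^{5}=0$. From that point on, however, the paper does not attempt any word-combinatorics at all; it simply observes that the existence of such $x,y,z$ contradicts \cite[Proposition~9]{S} and stops. Salwa's argument there proceeds via matrix representations over division rings (as the title of \cite{S} indicates), not by isolating monomials in the free expansion of $(1-x-y)^{5}=0$, so the mechanism behind the contradiction is quite different from what you sketch.

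The gap in your proposal is precisely the combinatorics you yourself flag as ``the main obstacle'' and then leave undone. With $y^{3}=0$ in place of $y^{2}=0$, your proposed flanking by $x$ on the left and $y^{2}$ on the right no longer isolates a single surviving word: carrying it out one finds
\[
xy^{2}=-10\,xyxy^{2}+10\,xy^{2}xy^{2}-5\,xyxyxy^{2}+xyxy^{2}xy^{2}+xy^{2}xyxy^{2},
\]
a relation among the two noncommuting elements $xy$ and $xy^{2}$ that does not by itself force any vanishing or produce an idempotent as in Proposition~\ref{lem5.2}. Other choices of flanking monomials yield further relations of the same shape, and there is no evident elementary way to combine them into a contradiction; extracting one is exactly the content of the result the paper cites. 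If you want a self-contained proof you must either actually close this system of word-identities (and show where the characteristic-$0$ hypothesis is used, since your ``final division step'' remains hypothetical) or reproduce Salwa's matrix method---neither is the routine upgrade of the type-$(2,\infty)$ argument that your outline anticipates.
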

\begin{proof} We can assume that $p=3$ and $q=5$. Suppose that $R$ is not commutative. As in the proof of Proposition \ref{lem5.2}, there exists a square-zero element $x$ that is not central.  So, there exist $y,z\in R$ such that $1-x=y+z$ and $y^3=z^5=0$. This contradicts \cite[Proposition 9]{S}. 
\end{proof}

We close this section with the following 

\noindent\textbf{Open Question.} If a ring $R$ has the $2$-nil-sum property of type $(p,q)$ with $p\in\mathbb{N}$ and $q\in\mathbb{N}\cup\{\infty\}$, is it commutative?

\section{Simple rings}

\begin{exa} \label{example}
There exists a simple ring with the $2$-nil-sum property that is not commutative.
\end{exa}

\begin{proof}
Let $\mathbb{F}_2$ be the field of $2$ elements. For each integer $n\ge 0$ we consider the diagonal morphism $$\varphi_n:\mathbb{M}_{2^n}(\mathbb{F}_2)\to \mathbb{M}_{2^{n+1}}(\mathbb{F}_2), A\mapsto \mathrm{diag}(A,A).$$ This is an inductive system, and we denote its colimit by  $R$, while $\varphi_n:\mathbb{M}_{2^n}(\mathbb{F}_2)\to R$ denote the canonical ring morphisms. Then $R$ is a simple ring (see \cite[Example 8.1]{G76}). 

Let $0\neq r\in R$ a non central-unit of $R$. Then we can view it as an image $r=\varphi_n(A)$ for some $0\neq A\in \mathbb{M}_{2^n}(\mathbb{F}_2)$. Note that $A\neq I_{2^n}$ since otherwise $r$ is the identity of $R$.
Observe that $r=\varphi_{n+1}(\mathrm{diag}(A,A))$. From $A\neq I_{2^n}$ it follows that $\mathrm{diag}(A,A)$ is not a scalar matrix. Since its trace is zero, there exist two nilpotent matrices $N_1$ and $N_2$ in $\mathbb{M}_{2^{n+1}}(\mathbb{F}_2)$ such that $\mathrm{diag}(A,A)=N_1+N_2$, see \cite[Proposition 3(ii)]{BC}. Then $r=\varphi_{n+1}(N_1)+\varphi_{n+1}(N_2)$ is a sum of two nilpotents.
\end{proof}

Similar examples can be obtained by using other fields (of positive characteristic).  We do not know other kind of examples of non-commutative rings with the $2$-nil-sum property.  In fact,  we will prove that if a simple ring with the $2$-nil-sum property satisfies some standard finiteness conditions then it is a field.  A basic example is the following.

\begin{exa}
{\rm Let $F$ be a field and $n\geq 2$ an integer. Since the traces of all nilpotent matrices in $\mathbb{M}_n(F)$ are $0$, it follows that $\mathbb{M}_n(F)$ does not have the $2$-nil-sum property. This can be easily extended to matrices over commutative rings (since in this case the trace of a nilpotent matrix has to be nilpotent).}
\end{exa}

Below we will study matrix rings over division rings. In this case, the above argument does not work because nilpotent matrices over division rings may have non-zero traces and,   moreover,  the trace is no longer invariant under similarity. For instance, if $A=\begin{pmatrix}\bf i&\bf j\\
-\bf j&\bf i\end{pmatrix}\in {\mathbb M}_2(\mathbb H)$, where  $\mathbb H$ is the ring of real quaternions, then $A^2=0$ and $\begin{pmatrix}1&0\\
\bf k&1\end{pmatrix}A\begin{pmatrix}1&0\\
-\bf k&1\end{pmatrix}=\begin{pmatrix}0&\bf j\\
0&0\end{pmatrix}$. 

We start with some elementary lemmas. We include the details of the proofs for the reader's convenience.

\begin{lem}\label{nil}
	Let $D$ be a division ring. If $A\in {\mathbb M}_n(D)$ is nilpotent, then $A^n=0$.
\end{lem}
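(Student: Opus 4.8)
The plan is to treat $D^n$ as a finite-dimensional right $D$-vector space, with $\dim_D D^n = n$, on which left multiplication by $A$ acts as a right $D$-linear endomorphism, and then to run the standard kernel-filtration argument. This goes through verbatim over a division ring because the dimension theory of vector spaces over $D$ behaves exactly as over a field; the noncommutativity of $D$ (which defeats trace or characteristic-polynomial methods, as the quaternion example above shows) plays no role, since only the right $D$-module structure is used.

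First I would consider the ascending chain of kernels
$$0 = \ker A^0 \subseteq \ker A^1 \subseteq \ker A^2 \subseteq \cdots,$$
each a right $D$-subspace of $D^n$. Since $A$ is nilpotent, say $A^m = 0$, we have $\ker A^m = D^n$, so the chain reaches all of $D^n$ at some stage.

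The key step is a stabilization lemma: if $\ker A^k = \ker A^{k+1}$ for some $k$, then $\ker A^k = \ker A^{k+j}$ for all $j \ge 0$. To see this, take $x \in \ker A^{k+2}$; then $A^{k+1}(Ax) = A^{k+2}x = 0$, so $Ax \in \ker A^{k+1} = \ker A^k$, whence $A^{k+1}x = A^k(Ax) = 0$ and $x \in \ker A^{k+1}$. This gives $\ker A^{k+1} = \ker A^{k+2}$, and the general case follows by induction on $j$.

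Finally, writing $d_k = \dim_D \ker A^k$, the stabilization lemma forces the inclusions to be strict until the chain fills $D^n$: if two consecutive kernels coincided before reaching $D^n$, the chain would remain below $D^n$ forever, contradicting $A^m = 0$. Since each strict inclusion of subspaces increases the dimension by at least $1$, starting from $d_0 = 0$ we reach $d = n$ in at most $n$ strict steps, so $d_n = n$ and hence $\ker A^n = D^n$, that is, $A^n = 0$. The only facts needed—that every subspace of $D^n$ has a well-defined dimension and that a proper inclusion of subspaces is dimension-strict—are standard for vector spaces over a division ring, so I expect no real obstacle; the argument is essentially the classical field-case proof, reorganized so that it never invokes commutativity of the scalars.
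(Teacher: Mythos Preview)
Your proof is correct and follows essentially the same approach as the paper's: both view $A$ as a right $D$-linear endomorphism of $D^n$ and use a dimension-counting argument on a chain that must be strict until it terminates. The only cosmetic difference is that the paper uses the descending chain of images $\mathrm{Im}(A)\supsetneq \mathrm{Im}(A^2)\supsetneq\cdots$ rather than your ascending chain of kernels, which is the dual formulation of the same idea.
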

\begin{proof} We identify $A$ with a linear transformation of the right vector space $V:=D^n$ over $D$. 
Then the chain
\begin{equation*}
Im(A)\supset Im(A^2)\supset Im(A^3)\supset \cdots
\end{equation*}	
should be strictly decreasing (otherwise, $A$ is not nilpotent). Since 	${\rm dim}(V_D)=n$, it follows that $A^n=0$. 
	\end{proof}

\begin{lem}\label{lem2.3}
	Let $R$ be a ring and $n\ge 2$. If  $A=\begin{pmatrix}a_{11}&\cdots&\cdots&a_{1n}\\
	1&\ddots&\ddots&\vdots\\
	&\ddots&\ddots&\vdots\\
	&&1&a_{nn}\end{pmatrix}\in {\mathbb M}_n(R)$, then $$A^k=\left(\begin{array}{c|c} * & * \\ \hline
	\begin{matrix}\sum_{i=1}^ka_{ii}&\cdots&\cdots&\cdots\\
	1&\ddots&\ddots&\vdots\\
	&\ddots&\ddots&\vdots\\
	&&1&\sum_{i=n-k+1}^na_{ii}\end{matrix} & * \end{array}\right),$$
	where the lower left block has size $(n-k+1)\times (n-k+1)$. So the $(k,1)$-entry of $A^k$ is $a_{11}+\cdots+a_{kk}$ for $k=1,\ldots, n$; in particular, the $(n,1)$-entry of $A^n$ is $a_{11}+\dots+a_{nn}$. 
\end{lem}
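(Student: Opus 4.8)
The plan is to induct on $k$, computing $A^{k+1}$ as the left product $A\cdot A^{k}$ and exploiting that $A$ is upper Hessenberg with $1$'s on the subdiagonal. Writing $A=(a_{ij})$, this means $a_{i+1,i}=1$ for all $i$, the diagonal entries are the displayed $a_{ii}$, and $a_{ij}=0$ whenever $i>j+1$.

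First I would restate the displayed conclusion as three assertions about the entries of $A^{k}$ in its lower-left $(n-k+1)\times(n-k+1)$ corner, namely rows $k,\dots,n$ against columns $1,\dots,n-k+1$, leaving the two $*$-blocks unconstrained. For $r$ in the appropriate range these read: (i) the block-diagonal entry is $(A^{k})_{k+r-1,\,r}=\sum_{i=r}^{k+r-1}a_{ii}$; (ii) the block-subdiagonal entry is $(A^{k})_{k+r,\,r}=1$; and (iii) everything strictly below vanishes, i.e.\ $(A^{k})_{m,\,r}=0$ whenever $m>k+r$. The base case $k=1$ is precisely the given form of $A$, since then the corner block is all of $A$.

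For the inductive step I would fix a column $r$ and expand $(A^{k+1})_{i,\,r}=\sum_{m}a_{i,m}(A^{k})_{m,\,r}$, observing that the two structural constraints collapse this sum to at most two terms: the Hessenberg shape of $A$ kills all $m<i-1$, while (iii) kills all $m>k+r$. For $i>k+r+1$ these two ranges are disjoint, so the sum is $0$, giving (iii) at level $k+1$; for $i=k+r+1$ only $m=k+r$ survives, so $(A^{k+1})_{k+r+1,\,r}=a_{k+r+1,k+r}(A^{k})_{k+r,\,r}=1\cdot 1=1$, giving (ii); and for $i=k+r$ exactly $m\in\{k+r-1,k+r\}$ survive, so by (i) and (ii)
$$(A^{k+1})_{k+r,\,r}=a_{k+r,k+r-1}(A^{k})_{k+r-1,\,r}+a_{k+r,k+r}(A^{k})_{k+r,\,r}=\sum_{i=r}^{k+r-1}a_{ii}+a_{k+r,k+r}=\sum_{i=r}^{k+r}a_{ii},$$
which is (i). Since the only products appearing are against the entry $1$ while the remaining combination is a sum, noncommutativity of $R$ causes no trouble, and the induction closes.

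I expect the sole difficulty to be the index bookkeeping: one must check that these constraints genuinely leave only the claimed one or two indices $m$ in range, and that the corner block keeps the correct size $(n-k+1)\times(n-k+1)$ at each stage. The final assertion is then the specialization $k=n$, $r=1$, which yields $(A^{n})_{n,\,1}=a_{11}+\cdots+a_{nn}$.
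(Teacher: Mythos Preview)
Your proposal is correct and is exactly the approach the paper intends: the paper's own proof is just the single line ``The claim can be proved by induction on $k$,'' and your argument is the natural fleshing-out of that induction via $A^{k+1}=A\cdot A^{k}$, with the index bookkeeping handled carefully. Your remark that every nontrivial product is against a subdiagonal $1$, so noncommutativity of $R$ is harmless, is a worthwhile point to make explicit.
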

\begin{proof}
	The claim can be proved by induction on $k$. 
\end{proof}
\begin{lem}\label{lem2.4}
Let $D$ be a division ring and $n\geq 2$ an integer. 
If $X=(x_{ij})\in {\mathbb M}_n(D)$ is a matrix such that the $(n-1)$-dimensional column vector $\beta=\left[ 
	\begin{array}{c}
	x_{21}\\
		\vdots\\
	x_{n1}\\
	\end{array}
	\right] $ is not zero, then there exists an invertible matrix $U=\begin{pmatrix}1&\bf 0\\
	\bf 0&V\end{pmatrix}\in {\mathbb M}_n(D)$ with $V\in {\mathbb M}_{n-1}(D)$, and $1< k\leq n$ such that 
	$$UXU^{-1}=\left(\begin{array}{c|c} \begin{matrix}w_{11}&\cdots&\cdots&w_{1k}\\
	1&\ddots&\ddots&\vdots\\
	&\ddots&\ddots&\vdots\\
	&&1&w_{kk} \end{matrix} & Y  \\ \hline
	\bf 0&  Z \end{array}\right).$$
\end{lem}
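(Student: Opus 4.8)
The plan is to treat $X$ as a right $D$-linear operator on $V=D^n$ (as in \lemref{nil}) and, relative to the fixed vector $e_1$, to build a Krylov-type chain all of whose members after the first lie in the subspace $W:=\mathrm{span}_D(e_2,\ldots,e_n)$; the matrix $U$ will then be the inverse of the corresponding change-of-basis matrix. I set $f_1=e_1$. Since the first column of $X$ is $Xe_1=e_1x_{11}+(e_2x_{21}+\cdots+e_nx_{n1})$ and $\beta\neq 0$, the vector $f_2:=Xf_1-e_1x_{11}=e_2x_{21}+\cdots+e_nx_{n1}$ is a nonzero element of $W$. Inductively, given independent $f_1,\ldots,f_j$ with $f_2,\ldots,f_j\in W$, I write $Xf_j=e_1d_j+w_j$ with $w_j\in W$ and put $f_{j+1}:=w_j=Xf_j-e_1d_j\in W$. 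Stripping off only the $e_1$-component, rather than reducing modulo the whole span $\mathrm{span}_D(f_1,\ldots,f_j)$, is precisely what keeps every new vector inside $W$, which is what the block shape of $U$ requires.

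I continue the chain until the first index $k$ with $Xf_k\in M:=\mathrm{span}_D(f_1,\ldots,f_k)$. Such a $k$ exists with $1<k\le n$: as long as $Xf_j\notin\mathrm{span}_D(f_1,\ldots,f_j)$ we have $f_{j+1}\equiv Xf_j\pmod{Df_1}$ lying outside $\mathrm{span}_D(f_1,\ldots,f_j)$, so $f_1,\ldots,f_{j+1}$ remain independent and the process must terminate by finiteness of $\dim V=n$; and it cannot terminate at $j=1$ because $f_2\neq 0$ forces $Xf_1\notin De_1$, whence $k\ge 2$. At the terminal index we have $Xf_i=e_1d_i+f_{i+1}\in M$ for $i<k$ together with $Xf_k\in M$, so $X$ sends a spanning set of $M$ into $M$ and $M$ is $X$-invariant.

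Finally I extend the linearly independent family $f_2,\ldots,f_k$ (which lies in $W$) to a basis $f_2,\ldots,f_n$ of $W$, so that $f_1,\ldots,f_n$ is a basis of $V$ with $f_1=e_1$ and $f_2,\ldots,f_n\in W$. Letting $P$ be the matrix with columns $f_1,\ldots,f_n$, the facts $f_1=e_1$ and that the remaining columns have zero first entry give $P=\begin{pmatrix}1&\mathbf 0\\ \mathbf 0&V'\end{pmatrix}$ with $V'$ invertible, so $U:=P^{-1}=\begin{pmatrix}1&\mathbf 0\\ \mathbf 0&(V')^{-1}\end{pmatrix}$ has the prescribed form. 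Reading $UXU^{-1}=P^{-1}XP$ off columnwise, the relations $Xf_j=e_1d_j+f_{j+1}$ ($j<k$) place $d_j$ in the first row, a $1$ on the subdiagonal and $0$ elsewhere in columns $1,\ldots,k-1$, while $Xf_k\in M$ makes column $k$ vanish below row $k$; together these produce the upper-Hessenberg top-left $k\times k$ block with unit subdiagonal (the shape in \lemref{lem2.3}) and the zero bottom-left block. The one point I would check most carefully is this dual bookkeeping—keeping each $f_j$ ($j\ge 2$) in $W$ while still arranging an $X$-invariant $M$; because $e_1=f_1$ heads the chain the two demands are compatible, but over a noncommutative $D$ one must be consistent, as in \lemref{nil}, about multiplying coordinates on the right throughout.
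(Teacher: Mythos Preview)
Your argument is correct. Treating $X$ as a right $D$-linear endomorphism of $D^n$ and building the modified Krylov chain $f_1=e_1,\ f_{j+1}=Xf_j-f_1d_j$ does exactly what is required: since $f_1=e_1$ and each $f_j$ $(j\ge 2)$ is forced into $W=\mathrm{span}_D(e_2,\dots,e_n)$ by the way you strip off the $e_1$-component, the change-of-basis matrix $P$ automatically has the block shape $\begin{pmatrix}1&\mathbf 0\\ \mathbf 0&V'\end{pmatrix}$, and the relations $Xf_j=f_1d_j+f_{j+1}$ for $j<k$ together with $Xf_k\in\mathrm{span}_D(f_1,\dots,f_k)$ give the Hessenberg block with unit subdiagonal and the zero block beneath it. The termination and independence arguments are fine, and the right-module bookkeeping is consistent.

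The paper's proof follows a genuinely different route. Instead of building a basis in one shot, it performs an iterative Gaussian-type reduction: at step $l$ it conjugates by a block-diagonal matrix $U_l=\mathrm{diag}(I_l,V_l)$, where $V_l\in\mathbb M_{n-l}(D)$ is chosen to send the current nonzero sub-column to $(1,0,\dots,0)^T$, and stops at the first $l$ for which the relevant block $W_l$ vanishes. The product $U=U_{k-1}\cdots U_1$ then has the required block form because each factor does. Your Krylov approach is more conceptual and in fact yields a slightly sharper normal form (in your $k\times k$ block the entries strictly between row $1$ and the subdiagonal in columns $1,\dots,k-1$ are all zero, a companion-type shape), which is more than \lemref{lem2.3} needs; the paper's elimination argument stays entirely at the level of explicit matrix manipulations and makes the block shape of $U$ visible at every step.
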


\begin{proof}
Write $X=\begin{pmatrix}x_{11}&Y_1 \\
	W_1&Z_1\end{pmatrix}$ in block form where $Z_1$ is an $(n-1)\times (n-1)$ matrix. Since $W_1=\beta\not= 0$ there exists an invertible matrix $V_1$ in ${\mathbb M}_{n-1}(D)$ such that $V_1W_1=\left[ 
	\begin{array}{c}
	1\\ 
	0\\ 
	\vdots\\
	0\\
	\end{array}
	\right]$. Then $U_1=\begin{pmatrix}1&\bf 0\\
	\bf 0&V_1\end{pmatrix}$ is invertible in ${\mathbb M}_n(D)$ 
	and 
	$U_1XU_1^{-1}=\begin{pmatrix}x_{11}&\gamma_1 V_{1}^{-1}\\
	V_1W_1 &V_1X_1V_{1}^{-1}\end{pmatrix}$, so the first column of $U_1XU_1^{-1}$ is  $\left[ 
	\begin{array}{c}
	x_{11}\\
	1\\ 
	0\\ 
	\vdots\\
	0\\
	\end{array}
	\right]$. 
	
	Write $U_1XU_1^{-1}=\left(\begin{array}{c|c}\begin{matrix}y_{11}&y_{12} \\
	1&y_{22}\end{matrix}&Y_2 \\ \hline
	W_2 &Z_2\end{array}\right)$, where $Z_2$ is an $(n-2)\times (n-2)$ matrix and the first column of $W_2$ is $\bf 0$. If $W_2=\bf 0$ we can simply take $U=U_1$. 
	If  $W_2\not= \bf 0$, there exists an invertible matrix $V_2$ in ${\mathbb M}_{n-2}(D)$ such that $V_2W_2=\left[ 
	\begin{array}{cc}
	0&1\\ 
	0&0\\ 
	\vdots&\vdots\\
	0&0\\
	\end{array}
	\right]$. Observe that $U_2=\begin{pmatrix}I_2&\bf 0\\
	\bf 0&V_2\end{pmatrix}$ is invertible in ${\mathbb M}_n(D)$ and 
	$$U_2U_1XU_1^{-1}U_2^{-1}=\left(\begin{array}{c|c}\begin{matrix}z_{11}&z_{12} &z_{13} \\ 
	1&z_{22} & z_{23} \\
	0&1 & z_{33}\end{matrix}&Y_3\\ \hline
	W_3 &Z_3\end{array}\right).$$
	The block consisting of the first two columns of $U_2U_1XU_1^{-1}U_2^{-1}$ is of the form $\left[ 
	\begin{array}{cc}
	z_{11}&z_{12}\\
	1&z_{22}\\
	0&1\\ 
	0&0\\ 
	\vdots&\vdots\\
	0&0\\
	\end{array}
	\right]$, so the first two columns of $W_3$ are zero. If $W_3=\bf 0$, we can take $U=U_2U_1$. If $W_3\not= \bf 0$, we continue in a similar way. 
	
This process has to stop after a finite number of steps. Therefore, there exists an integer $1< k\leq n$, and a family of invertible matrices $U_l=\begin{pmatrix}I_l&\bf 0\\
	\bf 0&V_l\end{pmatrix}$, $1\leq l\leq k-1$, such that 
\begin{itemize}	
\item $(U_l \cdots  U_1)X(U_l \cdots  U_1)^{-1}=\left(\begin{array}{c|c}\begin{matrix}w_{11}& w_{12} & \dots &w_{1l} & w_{1(l+1)} \\
	1 & w_{22} & \dots &w_{2l} & w_{2(l+1)} \\ 
	\vdots & \vdots & &\vdots & \vdots \\
	0& 0 &\dots &1 &w_{(l+1)(l+1)}\end{matrix}&X_{l+1}\\ \hline
	W_{l+1} &Y_{l+1}\end{array}\right),$
	\item the first $l$ columns of $W_{l+1}$ are zero,
\item $k=n$, or $k<n$ and $W_{k}=\bf 0$.
\end{itemize} 		
Now take $U=U_{k-1} \cdots U_1$. By the constructions of $U_1,\dots, U_{k-1}$, one easily sees that $U$ has the desired property. 	
	\end{proof}

\begin{rem}
The above lemma will be applied for the case when $X$ is nilpotent. We want to point out that it was proved in \cite{Cohn} that every nilpotent matrix is similar to a rational form (as in the commutative case).  But we need the weaker property presented above since we have to use the form of the matrix $U$. 
\end{rem}

\begin{lem}\label{lem-lines} Let $D$ be a division ring and $A=(a_{ij})\in {\mathbb M}_n(D)$ a matrix  with the property that the $k$-th row of $A$ is the only non-zero row of $A$.  Then $A$ is a sum of two nilpotents if and only if 
$a_{kk}=0$.
\end{lem}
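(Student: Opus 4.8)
The easy direction is the converse. If $a_{kk}=0$ then, since row $k$ is the only nonzero row of $A$, a direct computation gives $A^2 = a_{kk}A = 0$, so $A$ is itself nilpotent and $A = A + 0$ already exhibits it as a sum of two nilpotents. The content is therefore the forward implication, which I would prove by contradiction: assume $A = N_1 + N_2$ with $N_1,N_2$ nilpotent but $d := a_{kk}\neq 0$, and derive $d=0$ (the case $n=1$ being trivial, so I take $n\ge 2$).

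The plan is first to normalize $A$. Conjugating by the permutation matrix that swaps the indices $1$ and $k$ moves the unique nonzero row to row $1$ and carries $a_{kk}$ to the $(1,1)$-entry, while preserving the property of being a sum of two nilpotents. Writing the new first row as $(d,w)$ with $w=(a_{12},\dots,a_{1n})$ and $d\neq 0$, I would then conjugate by $U = \begin{pmatrix} 1 & d^{-1}w \\ 0 & I_{n-1}\end{pmatrix}$; a short calculation shows $UAU^{-1}=dE_{11}$. Thus I may assume outright that $A = dE_{11}$ with $d\neq 0$ and $dE_{11}=N_1+N_2$. The point of this normalization is twofold: $A$ now has a single nonzero entry, so $N_2 = A-N_1$ differs from $-N_1$ only in the $(1,1)$-entry; and any conjugation by a matrix of the shape $\mathrm{diag}(1,V)$ — exactly the form produced by Lemma \ref{lem2.4} — fixes $dE_{11}$ and so may be applied to the whole equation.

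The core argument then splits on the first column of $N_1$. If its below-diagonal part $(N_1)_{21},\dots,(N_1)_{n1}$ vanishes, the same holds for $N_2$, so $N_1 e_1 = e_1 (N_1)_{11}$ and $N_2 e_1 = e_1\bigl(d-(N_1)_{11}\bigr)$; evaluating $N_i^n = 0$ (Lemma \ref{nil}) on $e_1$ forces first $(N_1)_{11}=0$ and then $d^n=0$, whence $d=0$. Otherwise the below-diagonal part is nonzero, and I would apply Lemma \ref{lem2.4} to $N_1$: conjugating by the resulting $\mathrm{diag}(1,V)$ (which fixes $A$) puts $N_1 = \begin{pmatrix} C_1 & Y \\ 0 & Z\end{pmatrix}$ in block form, where $C_1$ is a $k\times k$ block ($1<k\le n$) of the unit-subdiagonal companion shape of Lemma \ref{lem2.3}. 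Since $A=dE_{11}$ is supported in the top-left block and vanishes below it, $N_2 = \begin{pmatrix} dE_{11}^{(k)}-C_1 & -Y \\ 0 & -Z\end{pmatrix}$ is block upper-triangular as well. A block upper-triangular matrix is nilpotent only if its diagonal blocks are, so both $C_1$ and $dE_{11}^{(k)}-C_1$ are nilpotent. Now Lemma \ref{nil} and Lemma \ref{lem2.3} applied to $C_1$ identify the $(k,1)$-entry of $C_1^k=0$ with the diagonal sum $\sum_{i=1}^k (C_1)_{ii}$, forcing this sum to be $0$; applying the same to $C_1-dE_{11}^{(k)}$, which is again of companion shape but with diagonal sum $\sum_i (C_1)_{ii}-d=-d$, gives $-d=0$. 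Either way $d=0$, the desired contradiction.

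I expect the main obstacle to be conceptual rather than computational: over a division ring the trace is neither similarity-invariant nor zero on nilpotents (the quaternionic example preceding the lemma exhibits a nilpotent with nonzero trace, and in fact $dE_{11}$ may have trace lying in the additive commutator subgroup), so no trace or reduced-trace argument can possibly succeed. The real work is to extract a genuine numerical obstruction, and the device that makes this possible is the normalization to $dE_{11}$ combined with Lemma \ref{lem2.4}: it simultaneously preserves $A$ and imposes enough companion structure on $N_1$, and hence on $N_2=A-N_1$, that Lemma \ref{lem2.3} can read off the two diagonal sums, which differ by precisely $d$.
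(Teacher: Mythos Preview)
Your argument is correct and follows essentially the same route as the paper: reduce to the case where the nonzero row is row~$1$, split on whether the below-diagonal part of the first column of $N_1$ vanishes, and in the nontrivial case conjugate by the $\mathrm{diag}(1,V)$ of Lemma~\ref{lem2.4} so that Lemmas~\ref{nil} and~\ref{lem2.3} read off the two diagonal sums, which differ by $d$. The one genuine difference is your extra normalization from $\begin{pmatrix}d&w\\ \mathbf 0&\mathbf 0\end{pmatrix}$ to $dE_{11}$ via the upper-triangular conjugation; the paper skips this and simply observes that $\mathrm{diag}(1,V)$ preserves the shape ``only first row nonzero with $(1,1)$-entry $a$'', which is already enough for the companion-block comparison. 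Your version is marginally cleaner (the conjugation literally fixes $A$), but the two arguments are otherwise identical.
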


\begin{proof} The sufficienty is obvious. For the necessity, let us observe that $A$ is 
similar to $A'=\begin{pmatrix}a&\alpha\\
	\bf 0&\bf 0\end{pmatrix}$,  where  $a=a_{kk}\in D$ and $\alpha$ is an $(n-1)$-dimensional row vector.  Hence  $A'=X+Y$, where $X, Y$ are nilpotents.  
Write $X=\begin{pmatrix}x_{11}&\gamma \\
	\beta&X_1\end{pmatrix}$ in block form where $X_1$ is an $(n-1)\times (n-1)$ matrix.  If $\beta= 0$,  then $a=x_{11}+(a-x_{11})$ is a sum of two nilpotents in $D$; so $a=0$.   Thus,  we can assume that $\beta\not= 0$. There exists an invertible matrix $U=\begin{pmatrix}1&\bf 0\\
	\bf 0&V\end{pmatrix} \in\mathbb{M}_n(D)$ and an integer $k>1$ as in Lemma \ref{lem2.4} such that $$UXU^{-1}=\left(\begin{array}{c|c} \begin{matrix}w_{11}&\cdots&\cdots&w_{1k}\\
	1&\ddots&\ddots&\vdots\\
	&\ddots&\ddots&\vdots\\
	&&1&w_{kk} \end{matrix} & T  \\ \hline
	\bf 0&  Z \end{array}\right).$$  
Note that $UA'U^{-1}=\begin{pmatrix}a&\begin{pmatrix}a_2&\cdots&a_n\end{pmatrix}\\
		\bf 0&\bf 0\end{pmatrix}$ for some $a_2,\ldots,a_n$ in $D$, and that $UXU^{-1}$ and $UYU^{-1}$ are nilpotent matrices.
It follows that the matrices $$\left(\begin{matrix}w_{11}&\cdots&\cdots&w_{1k}\\
	1&\ddots&\ddots&\vdots\\
	&\ddots&\ddots&\vdots\\
	&&1&w_{kk} \end{matrix}\right)\text{ and }\left(\begin{matrix}w_{11}-a&\cdots&\cdots&w_{1k}-a_k\\
	1&\ddots&\ddots&\vdots\\
	&\ddots&\ddots&\vdots\\
	&&1&w_{kk} \end{matrix}\right)$$ are both nilpotent matrices. Using Lemma \ref{nil} and Lemma \ref{lem2.3}, we obtain the equalities $w_{11}+\dots+w_{kk}=0=w_{11}+\dots +w_{kk}-a$, and hence $a=0$.  
\end{proof}

\begin{thm}\label{Goldie}  Let $R$ be a simple right Goldie ring with the $2$-nil-sum property. Then $R$ is a field.
\end{thm}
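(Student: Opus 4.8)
The plan is to invoke Goldie's theorem to pass to the classical right quotient ring and then to manufacture, inside $R$ itself, a matrix to which \lemref{lem-lines} applies.

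First I would record that a simple ring is prime, so $R$ is a prime right Goldie ring; by Goldie's theorem $R$ is a right order in its classical right quotient ring $Q$, which is simple Artinian, say $Q\cong {\mathbb M}_n(D)$ for a division ring $D$ and some $n\ge 1$. If $n=1$ then $Q=D$ is a division ring and $R$, being a subring of $D$, is a domain; hence ${\rm nil}(R)=0$, the only sum of two nilpotents is $0$, and the $2$-nil-sum property forces every non central-unit to equal $0$. Thus every nonzero element of $R$ is a central unit and $R$ is a field. It therefore suffices to rule out $n\ge 2$.

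So assume $n\ge 2$ and aim for a contradiction. Identify $Q$ with ${\mathbb M}_n(D)$ and let $L:=R\cap E_{11}Q$ be the set of matrices in $R$ supported on the first row only. The right Ore condition provides a regular $c\in R$ with $E_{11}c\in R$; since $c$ is invertible in $Q$ its first row is nonzero, so $E_{11}c$ is a nonzero element of $L$, and $L$ is a nonzero right ideal of $R$. The key step is to produce some $x\in L$ whose $(1,1)$-entry is nonzero. Suppose not, so $x_{11}=0$ for every $x\in L$. Fix $0\ne x\in L$ and write $x=e_1\gamma$, where $e_1$ is the first standard column and $\gamma\ne 0$ is the first row of $x$. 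For every $r\in R$ we have $xr\in L$, hence $(xr)_{11}=\gamma(re_1)=0$; multiplying this scalar on the left by $e_1$ gives $xre_1=0$, and therefore $xrx=(xre_1)\gamma=0$. Thus $xRx=0$ with $x\ne 0$, contradicting the primeness of $R$. Hence there exists $x\in L$ with $x_{11}\ne 0$.

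Finally I would conclude with \lemref{lem-lines}. The element $x$ is supported on its first row, so by \lemref{lem-lines} (with $k=1$) it is a sum of two nilpotents if and only if $x_{11}=0$; as $x_{11}\ne 0$, $x$ is not $2$-nilgood. On the other hand $x$ has rank $1<n$, so it is singular in $Q$ and hence not a unit of $R$, in particular not a central unit. This contradicts the $2$-nil-sum property, ruling out $n\ge 2$ and completing the proof. The main obstacle is precisely the key step above: while \lemref{nil} guarantees that nilpotents of $R$ have index at most $n$ and \lemref{lem-lines} identifies the obstruction on a single row, the real content is to realize a single-row matrix with nonzero diagonal entry inside the order $R$ itself, rather than merely in $Q$, which is what the combination of Ore localization and primeness accomplishes.
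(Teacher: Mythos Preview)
Your argument is correct and it is genuinely different from the paper's route to the contradiction when $n\ge 2$.

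Both proofs reduce to producing, inside the order $R\subseteq {\mathbb M}_n(D)$, a matrix supported on a single row with a nonzero diagonal entry, and then invoking \lemref{lem-lines}. The paper does this by an iterative combinatorial device: it shows every $\mathcal A_k$ (matrices in $R$ supported on row $k$) is nonempty, defines $i(k)$ as the leftmost column reached by some element of $\mathcal A_k$, and proves via \lemref{lem-lines} that $k<i(k)$ forces $i(k)<i(i(k))$, yielding an impossible infinite increasing chain $1<i(1)<i(i(1))<\cdots\le n$. Your approach bypasses this: you observe directly that if every $x\in L=R\cap E_{11}Q$ had $x_{11}=0$, then a fixed $0\ne x\in L$ would satisfy $xRx=0$, contradicting primeness of the simple ring $R$. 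This is shorter and isolates the essential point---primeness is exactly what guarantees one cannot ``miss'' the diagonal---whereas the paper's chain argument never names primeness explicitly. On the other hand, the paper's argument is slightly more self-contained in that it extracts the contradiction purely from the interplay of the sets $\mathcal A_k$ and \lemref{lem-lines}, without a separate appeal to $xRx=0$. Either way, the heavy lifting is done by \lemref{lem-lines}, which both proofs use in the same way (a decomposition in $R$ is a fortiori one in ${\mathbb M}_n(D)$).
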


\begin{proof}
Note that we can view $R$ as a right order of ${\mathbb M}_n(D)$ where $n\ge 1$ and $D$ is a division ring.  If $n=1$, then $R$ is a subring of a division ring. Since ${\rm nil}(R)=0$, the only non central-unit of $R$ is $0$, so $R$ is a field.  

We next show that $n\ge 2$ yields a contradiction.  
Assume that $n\ge 2$. For all $1\leq k\leq n$, we denote by $\mathcal{A}_k$ the set of all non-zero matrices $A=(a_{ij})\in R$ with the property that the $k$-th row of $A$ is the only non-zero row of $A$.

  Since $R$ is a right order of ${\mathbb M}_n(D)$, for $1\leq k\leq n$ we can write $E_{kk}=U_{k}S_{k}^{-1}$ where $U_{k}, S_{k}\in R$ and where $U_{k}=(u^{(k)}_{ij})$ and $S_{k}=(s^{(k)}_{ij})$. It follows that $U_{k}=E_{kk}S_{k}\in \mathcal{A}_k$, so all sets $\mathcal{A}_k$ are non-empty. 

Moreover, if $1\leq k\leq n$, we denote by $i(k)$ the minimal integer with the property that there exists $(a_{ij})\in \mathcal{A}_k$ such that $a_{kl}=0$ for all $l<i(k)$ and $a_{ki(k)}\neq 0$. 

We show that if $k<i(k)$ then $i(k)<i(i(k))$. In order to prove this, let $A=(a_{ij})\in\mathcal{A}_k$ be a matrix such that  $a_{kl}=0$ for all $l<i(k)$ and
$a:=a_{ki(k)}\neq 0$. For any $B=(b_{ij})\in \mathcal{A}_{i(k)}$, the $k$-th row of $AB$ is $(ab_{i(k)1},\dots,ab_{i(k)i(k)},\dots, ab_{i(k)n})$ and this is the only possible non-zero row of $AB$. Since $a\neq 0$ it follows, by the minimality of $i(k)$, that $b_{i(k)1}=\dots=b_{i(k)(i(k)-1)}=0$. Moreover,  by Lemma \ref{lem-lines} it follows that $b_{i(k)i(k)}=0$ as $B\in \mathcal{A}_{i(k)}$. Hence, $i(k)<i(i(k))$.

Now we close the proof by observing that $1<i(1)$, hence we obtain a strictly increasing infinite sequence bounded above by $n$: $1<i(1)<i(i(1))<\cdots $. This is a contradiction. 	
\end{proof}


\begin{rem}
By Theorem 3.8,  for a division ring $D$, the ring $\mathbb{M}_n(D)$ satisfies the $2$-nil-sum property iff $n=1$ and $D$ is a field.  Note that there exists a division ring $D$ such that all matrices in ${\mathbb M}_3(D)$ are sums of three nilpotent matrices (see \cite{Salwa-02}).
\end{rem}



	 

We close the paper, with some applications of Theorem \ref{reduction} and Theorem \ref{Goldie}. 
A nonzero right ideal $I$ of a ring $R$ is called a uniform right ideal if the intersection of any two nonzero right ideals contained in $I$ is nonzero. A ring $R$ is semipotent if every nonzero right ideal not contained in $J(R)$ contains a nonzero idempotent.   A ring $R$ is said to be of bounded index (of nilpotence) if there is a positive integer $n$ such that $a^n=0$ for all nilpotents $a$ of $R$.

\begin{cor}\label{cor3.8}
	A ring $R$ with the $2$-nil-sum property is commutative under any of the following additional assumptions:
		\begin{enumerate}
			\item  $R$ contains a uniform ideal;
			\item 	$R$ is a semipotent ring of bounded index;
			\item $R$ is a semilocal ring.
		\end{enumerate}
	\end{cor}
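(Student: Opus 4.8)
The plan is to use the Reduction Theorem (Theorem \ref{reduction}) to split into two cases: if $R$ is a commutative local ring with nil Jacobson radical there is nothing to prove, so in each of (1)--(3) it suffices to treat the case where $R$ is a \emph{simple} ring with the $2$-nil-sum property and show that $R$ is in fact a field. The engine for this is Theorem \ref{Goldie}: a simple ring with the $2$-nil-sum property that is right Goldie must be a field, hence commutative. So the entire corollary reduces to showing that each of the three hypotheses forces the simple ring $R$ to be right Goldie.

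The first observation, used throughout, is that a simple ring is automatically right nonsingular: the right singular ideal $Z_r(R)$ is a two-sided ideal, and it cannot equal $R$ (that would force the essentiality of $\mathrm{r}(1)=0$, i.e.\ $R=0$), so $Z_r(R)=0$. Since $R$ is also prime, Goldie's theorem lets me conclude that $R$ is right Goldie as soon as it has \emph{finite right uniform dimension}: in a right nonsingular ring every right annihilator is a closed right ideal (because $R/\mathrm{r}(a)\cong aR$ is nonsingular, and an intersection of closed submodules is closed), and a module of finite uniform dimension has the ascending chain condition on closed submodules, giving ACC on right annihilators. Thus in every case I only need a uniform-dimension bound.

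For (3) this is immediate: a simple ring has $J(R)=0$ by Lemma \ref{lem2.2}(3), so semilocal forces $R$ to be semisimple Artinian, hence simple Artinian $\mathbb{M}_n(D)$, which is right Goldie (one may alternatively finish directly from the Remark after Theorem \ref{Goldie}). For (1), I use simplicity to spread the uniform right ideal $U$ across $R$: since $RUR$ is a nonzero ideal it equals $R$, so $1=\sum_{i=1}^m a_iu_ib_i$ with $u_i\in U$, whence $R=\sum_{i=1}^m a_iU$; each $a_iU$ is a homomorphic image of $U_R$ under left multiplication, so has uniform dimension at most that of $U_R$, namely $1$, and by subadditivity the right uniform dimension of $R$ is at most $m$. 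For (2), simplicity again gives $J(R)=0$, so semipotency means every nonzero right ideal contains a nonzero idempotent; I claim bounded index of nilpotence $n$ caps the uniform dimension at $n$. If not, there are $n+1$ independent nonzero right ideals, yielding (after the standard orthogonalization of independent idempotents) orthogonal nonzero idempotents $f_1,\dots,f_{n+1}$. Using primeness I can choose $a_i\in f_iRf_{i+1}$ with $a_1a_2\cdots a_n\neq 0$ (at each stage primeness forbids $bRc=0$ for nonzero $b,c$), and then $x=a_1+\cdots+a_n$ is nilpotent with $x^n=a_1\cdots a_n\neq 0$ and $x^{n+1}=0$, a nilpotent of index $n+1>n$, a contradiction.

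Once finite uniform dimension is established, the nonsingularity argument of the second paragraph upgrades it to right Goldie in all three cases, and Theorem \ref{Goldie} finishes the proof: $R$ is a field, hence commutative. I expect the substantive step to be the uniform-dimension bound in (2); the nilpotent-shift construction and, in particular, the prime-ring selection guaranteeing $a_1\cdots a_n\neq 0$ carry the real content, whereas (1) and (3) are quick once the nonsingular/Goldie framework is in place.
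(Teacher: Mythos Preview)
Your proof is correct and follows the paper's overall strategy: reduce via Theorem~\ref{reduction} to the simple case, show that $R$ is right Goldie, then apply Theorem~\ref{Goldie}. The difference lies in how the Goldie conclusion is obtained. The paper outsources this: for (1) it cites \cite{H} for finite uniform dimension and then \cite{K} and \cite[Prop.~13.41]{L} to conclude right Goldie; for (2) it cites \cite[Corollary~6]{B} to get that $R$ is simple Artinian outright; (3) is the same in both. You instead supply self-contained arguments: simple $\Rightarrow$ right nonsingular, then finite uniform dimension $\Rightarrow$ ACC on closed submodules $\Rightarrow$ ACC on right annihilators, giving right Goldie; the spreading argument $1=\sum a_iu_ib_i$ for (1) is essentially Hart's proof, and the nilpotent-shift $x=a_1+\cdots+a_n$ for (2) is the classical bounded-index obstruction. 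Your route is more elementary and independent of the cited literature, at the cost of length. One caution in (2): the ``standard orthogonalization of independent idempotents'' is not a purely formal manipulation of the $e_i\in I_i$; it genuinely reuses semipotency, picking a new idempotent $g$ in the nonzero right ideal $(1-E_k)I_{k+1}$ (where $E_k=f_1+\cdots+f_k$) and then replacing $g$ by $g-gE_k$. With that understood, your construction of $n{+}1$ orthogonal idempotents goes through and the contradiction follows.
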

	
\begin{proof} By Theorem \ref{reduction},  suppose that $R$ is a simple ring.

(1) It follows from \cite{H} that $R$ contains no infinite direct sums of right ideals (i.e., $R_R$ is of finite uniform dimension).
	 Moreover, from the last remark of \cite{K} it follows that the maximal right ring of quotients of $R$ is simple artinian.   Thus, $R$ is right Goldie by \cite[Proposition 13.41]{L}.

(2) If $R$ is semipotent of bounded index, then $R$ is simple artinian by \cite[Corollary 6]{B}.

(3) If $R$ is semilocal, then $R$ is simple artinian.

So the claim follows from Theorem \ref{Goldie}.
	\end{proof}

\section*{Acknowledgment} 
Breaz was supported by a grant of the Ministry of Research, Innovation and Digitization, CNCS/CCCDI – UEFISCDI, project number PN-III-P4-ID-PCE-2020-0454, within PNCDI III.   Zhou was supported by a Discovery Grant
(RGPIN-2016-04706) from NSERC of Canada.

\end{document}